\documentclass[12pt,reqno,oneside]{amsart}
\usepackage{geometry}                
\usepackage{graphicx}
\usepackage{amssymb}
\usepackage{epstopdf}
\usepackage{bm}

\usepackage{geometry}
\geometry{a4paper}
\usepackage{hyperref}
\usepackage{graphicx}

 \textheight 218mm

\usepackage{booktabs} 
\usepackage{array} 
\usepackage{paralist} 
\usepackage{verbatim} 
\usepackage{subfig} 
\usepackage{tabularx}
\usepackage{fancyhdr} 
\pagestyle{fancy} 

\lhead{}\chead{}\rhead{}
\lfoot{}\cfoot{\thepage}\rfoot{}

\usepackage{amsmath,amsfonts,amsthm,mathrsfs,amssymb,cite}
\usepackage[usenames]{color}

\newcommand{\Bx}{\mathbf{x}}
\newcommand{\By}{\mathbf{y}}

\newcommand{\p}{\partial}

\newtheorem{thm}{Theorem}[section]

\newtheorem{lem}{Lemma}[section]

\theoremstyle{definition}

\theoremstyle{remark}

\newtheorem{rem}{Remark}[section]
\numberwithin{equation}{section}


\numberwithin{equation}{section}
\newcounter{saveeqn}


\newcommand{\eqnref}[1]{(\ref {#1})}

\newcommand{\Bz}{\mathbf{z}}

\newcommand{\Acal}{\mathcal{A}}
\newcommand{\Kcal}{\mathcal{K}}

\newcommand{\Scal}{\mathcal{S}}
\newcommand{\Dcal}{\mathcal{D}}

\newcommand{\Ocal}{\mathcal{O}}

\newcommand{\ds}{\displaystyle}

\newcommand{\RR}{\mathbb{R}}

\newcommand{\beq}{\begin{equation}}
\newcommand{\eeq}{\end{equation}}

\DeclareMathAlphabet{\itbf}{OML}{cmm}{b}{it}

\title[Blow-up estimates for electric fields in the quasi-static regime]{Gradient estimates for electric fields with multi-scale inclusions in the quasi-static regime }

\author{Youjun Deng}
\address{School of Mathematics and Statistics, Central South University, Changsha, Hunan, China.}
\email{youjundeng@csu.edu.cn, dengyijun\_001@163.com}

\author{Xiaoping Fang}
\address{College of Science, Hunan University of Commerce, Changsha 410205, China; Key Laboratory of Hunan Province for Statistical Learning and Intelligent Computation, Hunan University of Commerce, Changsha  410205, China}
\email{fxp1222@163.com}

\author{Hongyu Liu}
\address{Department of Mathematics, City University of Hong Kong, Hong Kong SAR, China.}
\email{hongyu.liuip@gmail.com}


\begin{document}
\maketitle

\begin{abstract}
In this paper, we are concerned with the  gradient estimate of the electric field due to two nearly touching dielectric inclusions, which is a central topic in the theory of composite materials. We derive accurate quantitative characterisations of the gradient fields in the transverse electromagnetic case within the quasi-static regime, which clearly indicate the optimal blowup rate or non-blowup of the gradient fields in different scenarios. There are mainly two novelties of our study. First, the sizes of the two material inclusions may be of different scales. Second, we consider our study in the quasi-static regime, whereas most of the existing studies are concerned with the static case.

\medskip

\medskip

\noindent{\bf Keywords:}~~composite optical materials; nearly touching inclusions; gradient estimates; blow up; quasi-static; multi-scale

\noindent{\bf 2010 Mathematics Subject Classification:}~~ 35J25; 35C20; 78A40

\end{abstract}

\section{Introduction}

Stress concentration is a peculiar phenomenon that widely occurs in continuum mechanics. It is a central topic in the theory of composite materials, where the concentration occurs due to the nearly touching of material inclusions that are the building blocks of the composite material. The degree of concentration is characterised by the blowup rate of the gradient of the underlying field. There are extensive studies in the literature on the gradient estimates of the underlying fields due to two nearly touching inclusions. We refer to \cite{LN03,LV00} for related results in general elliptic system,
\cite{ACKLY13,BLL15,BLL17,KLY15,KY19,Yun09} for elastostatics, \cite{AKKY21} for stokes flow problem, and \cite{AKLL07,AKLLZ09,AKL05,BLY09,Yun07, BLY10,KLY14,Lek10,KY09} in electrostatics for optical materials. The gradient estimates depend on the background field as well as the asymptotic parameter $\epsilon$ which signifies the distance between the closely spaced material inclusions. Generically, the optimal blow up rate of the gradient field is of order $1/\sqrt{\epsilon}$ in two dimensions, whereas it is $(\epsilon|\ln\epsilon|)^{-1}$  in three dimensions. In establishing those results, it is usually assumed that the inclusions are of regular size, i.e., the size is of order $\mathcal{O}(1)$ compared to the asymptotic distance parameter $\epsilon\ll 1$. In fact, it is shown in \cite{AKLL07,KLY13} that if the size of the two objects are of the same order as the distance between them, the gradient stays bounded. To our best knowledge, there are few studies on the case that the sizes of the inclusions are of different scales. Moreover, very few results are concerned with the gradient estimates for waves in the frequency regime. There is a major difficulty for the latter case, i.e. the maximum principle fails for the wave system (cf. \cite{BNVa94}).

In this paper, we study the gradient estimate for the electromagnetic field in the transverse model in $\mathbb{R}^2$ due to nearly touching dielectric inclusions. We consider our study in the quasi-static regime, namely the size of the inclusion is smaller than the operating wavelength. Nevertheless, we allow the sizes of the inclusions to be of the same scale or different scales. That is, one inclusion may be of regular size, while the size of the other one can be very large (actually, can be related to the asymptotic parameter $\epsilon$). Geometrically, this means that the curvatures of the nearly touching faces of the two inclusions may be in sharply different scales, say e.g. one is very high while the other is very low (nearly flat). In such a general scenario, we derive an accurate gradient estimate of the electric field, which is contained in \eqref{eq:leessest01} in Theorem \eqref{th:main01}. There are two parts in the asymptotic estimate: the first one accounts for the static effect, whereas the second one accounts for the frequency effect. The static part recovers the known results in the literature if both inclusions are of regular size. It also covers the more general scenario that the two inclusions are of sharply different scales.
It is more interesting to note that the frequency part can induce new blowup phenomena. In fact, even if the static part vanishes, there might still be the blowup phenomenon in certain generic scenarios due to the frequency part. In deriving the new gradient estimate, we develop techniques that combine layer-potential operators with asymptotic analysis and singular decomposition of the wave field.

The rest of the paper is organized as follows. In Section 2, we present the mathematical setup of our study as well as state the main results of the paper. In Section 3, we use layer potential technique to derive the integral representation of the solution as well as the associated asymptotic expansions. The estimates of the nonsingular and singular parts of the gradient fields are established in Sections 4 and 5, respectively.

\section{Mathematical setup and statement of the main results}

In this section, we present the mathematical formulation of the transverse electromagnetic scattering with multi-scale dielectric inclusions. Then we state the main results in this paper, whose proofs shall be postponed to the subsequent sections.

\subsection{Mathematical setup}

Let $B_1$ and $B_2$ be two disks in $\mathbb{R}^2$. Let $\Bz_j\in\mathbb{R}^2$ and $r_j\in\mathbb{R}_+$ be the center and radius of $B_j$, $j=1, 2$, respectively. Define $\epsilon:=\mathrm{dist}(B_1, B_2)$ and suppose $\epsilon\ll 1$. Here, $B_1$ and $B_2$ represent the two dielectric inclusions and they are closely spaced, characterised by the asymptotic distance parameter $\epsilon\in\mathbb{R}_+$. By rigid motions if necessary, we can assume without loss of generality that
\begin{equation}\label{eq:cc1}
\Bz_1=(-r_1-\frac{\epsilon}{2}, 0)\quad \mbox{and} \quad \Bz_2=(r_2+\frac{\epsilon}{2}, 0).
\end{equation}
In what follows, we set
\begin{equation}\label{eq:cc2}
r_1=r_{1,\alpha_1}\epsilon^{\alpha_1}\quad \mbox{and} \quad r_2=r_{2,\alpha_2}\epsilon^{\alpha_2},\quad \alpha_j\in\mathbb{R},\ j=1,2,
\end{equation}
 where $r_{1,\alpha_1}$ and $r_{2,\alpha_2}$ are positive constants that are independent of $\epsilon$. It is pointed out that if one takes $\alpha_1=\alpha_2=0$, then both $B_1$ and $B_2$ are of regular size. It is emphasized that $\alpha_j$ can be negative or positive, respectively corresponding to the high- and low-curvature cases.
 Define
 \begin{equation}\label{eq:cc3}
 \alpha_+=\max(\alpha_1, \alpha_2)\quad \mbox{and}\quad \alpha_-=\min(\alpha_1,\alpha_2).
 \end{equation}

As mentioned earlier, $B_1$ and $B_2$ signify two dielectric inclusions embedded in a uniformly homogeneous medium. The medium parameters are characterised by the electric permittivity $\varepsilon$ and magnetic permeability $\mu$. By normalisation, we assume that $\varepsilon=\mu=1$ in $\mathbb{R}^2\setminus\overline{B_1\cup B_2}$. Let $\varepsilon=\varepsilon_1$ and $\mu=1$ in $B_1\cup B_2$, where $\varepsilon_1\in\mathbb{R}_+$. We consider the transverse magnetic scattering, which is described by the following system (cf. \cite{CDL}):
\beq\label{eq:helm00}
\begin{cases}
\Delta u^* + \omega^2 u^* = 0 \hspace*{3.15cm} \mbox{in} \quad \RR^2\setminus\overline{B_1\cup B_2},\\
\nabla\cdot (\frac{1}{\varepsilon_1}\nabla u^*) +\omega^2 u^*= 0 \hspace*{1.8cm} \mbox{in} \quad B_1\cup B_2,\\
u^*|_+= u^*|_-, \quad \frac{\p u^*}{\p \nu}\Big|_+=  \frac{1}{\varepsilon_1}\frac{\p u^*}{\p \nu}\Big|_-\quad \mbox{on} \quad \p B_1\cup \p B_2,\\
(u^*-u^i)(\Bx) \mbox{ satisfies the Sommerfeld radiation condition,}
\end{cases}
\eeq
where $\omega\in\mathbb{R}_+$ signifies the angular frequency of the wave propagation and, $u^i$ and $u^*$ respectively denote the incident and total wave fields. $u^i$ is an entire solution to $\Delta u^i+ \omega^2 u^i=0$ in $\RR^2$, and one special case is that it is a plane wave of the form $u^i=e^{i\omega\Bx\cdot\mathbf{d}}$, where $\mathbf{d}\in \mathbb{S}^2$ signifies the impinging direction.
By the Sommerfeld radiation condition, we mean that the scattered wave $u^s(\Bx)=(u^*-u^i)(\Bx)$ satisfies
\begin{equation}\label{eq:rad}
\lim_{|\mathbf{x}|\rightarrow +\infty}|\mathbf{x}|^{1/2}\left(\frac{\partial u^s(\mathbf{x})}{\partial |\mathbf{x}|}-i\omega u^s(\mathbf{x})\right)=0.
\end{equation}
Throughout the rest paper, we shall consider $\omega\ll 1$ and $\varepsilon_1=\Ocal(\omega)$.

\subsection{Main gradient estimate and discussion}
We present our main result in this paper as follows:
\begin{thm}\label{th:main01}
Suppose $\omega\cdot\epsilon^{\alpha_-}\ll 1$, and
\beq\label{eq:thuexp01}
u^i=u_0^i+\sum_{j=1}^\infty \omega^j u_j^i,
\eeq
where the functions $u_j^i$, $j=0, 1, 2, \ldots$ are independent of $\omega$. Let $u^*(\Bx)$ be defined in \eqnref{eq:helm00}. Then for any bounded set $\Omega$ containing $\overline{B}_1$ and $\overline{B}_2$, it holds that
\beq\label{eq:leessest01}
\begin{split}
\|\nabla u^*\|_{L^{\infty}(\Omega\setminus\overline{B_1\cup B_2})}\sim &\frac{C_0}{r_-}\epsilon^{\min(\alpha_+,1)/2-1/2}\Big(\p_{\Bx_1} u^i(\mathbf{0})+\frac{1}{\pi}\omega^2|\ln \omega|\int_{B_1\cup B_2} \p_{\Bx_1} u^i \\
&\quad\quad\quad\quad\quad\quad\quad\quad+\Ocal(\omega^2)\Big)+\Ocal(1),
\end{split}
\eeq
where $r_-$ is defined by
\beq\label{eq:thdenr01}
r_-=
\left\{
\begin{array}{ll}
\frac{\alpha_2-\alpha_-}{\alpha_2-\alpha_1}r_{1,\alpha_1}+\frac{\alpha_1-\alpha_-}{\alpha_1-\alpha_2}r_{2,\alpha_2}, & \alpha_1\neq \alpha_2, \\
r_{1,\alpha_1}+r_{2,\alpha_2}, & \alpha_1= \alpha_2,
\end{array}
\right.
\eeq
and $C_0>0$ is the coefficient of the leading order term of $\tau$ defined in \eqnref{eq:deftau01} in what follows.
\end{thm}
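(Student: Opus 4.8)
\medskip
\noindent\textbf{Proof strategy.}
The plan is to recast the transmission problem \eqref{eq:helm00} as a system of boundary integral equations, to expand all quantities in the small parameter $\omega$ (the quasi-static expansion), and then to perform a singular decomposition of the resulting electrostatic field so as to isolate the part of $\nabla u^{*}$ that concentrates in the thin neck between $B_{1}$ and $B_{2}$. First I would represent $u^{*}$ outside $\overline{B_{1}\cup B_{2}}$ as $u^{i}+\mathcal{S}^{\omega}_{B_{1}}[\phi_{1}]+\mathcal{S}^{\omega}_{B_{2}}[\phi_{2}]$, and inside each $B_{j}$ by a single-layer potential with interior wavenumber $\sqrt{\varepsilon_{1}}\,\omega$. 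Imposing the two transmission conditions turns the problem into a $2\times 2$ system $(\lambda(\varepsilon_{1})\,\mathbb{I}-\mathbb{K}^{*})[(\phi_{1},\phi_{2})]=\mathbf{f}$, where $\mathbb{K}^{*}$ is the matrix Neumann--Poincar\'{e} operator on $\partial B_{1}\cup\partial B_{2}$ together with its off-diagonal coupling blocks, $\lambda(\varepsilon_{1})$ is a contrast parameter that approaches an endpoint of the spectral interval $[-\tfrac{1}{2},\tfrac{1}{2}]$ of $\mathbb{K}^{*}$ as $\varepsilon_{1}=\mathcal{O}(\omega)\to 0$, and $\mathbf{f}$ is built from the Neumann data of $u^{i}$ on the two circles. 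This reduction and the solvability of the system constitute Section 3.

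\medskip
\noindent\textbf{Quasi-static expansion.}
Next I would insert the small-argument expansion of the two-dimensional Helmholtz fundamental solution, of the schematic form $\tfrac{1}{2\pi}\ln|\Bx|+\eta_{\omega}+\mathcal{O}(\omega^{2}|\Bx|^{2}\ln\omega)$ with $\eta_{\omega}$ containing a term proportional to $\ln\omega$, so that $\mathcal{S}^{\omega}_{B_{j}}=\mathcal{S}^{0}_{B_{j}}+\eta_{\omega}\langle\,\cdot\,,1\rangle+\mathcal{O}(\omega^{2}\ln\omega)$, and substitute the ansatz \eqref{eq:thuexp01} for $u^{i}$. Collecting powers of $\omega$ and $\omega^{2}\ln\omega$ in the boundary system, the leading order is exactly the electrostatic transmission problem, whose density I call $\phi_{0}$; the first correction is driven only by the total charges $\int_{\partial B_{j}}\phi_{0}$, which by the divergence theorem and the interior equation are controlled by $\int_{B_{1}\cup B_{2}}\partial_{\Bx_{1}}u^{i}$. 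This is the origin of the term $\tfrac{1}{\pi}\omega^{2}|\ln\omega|\int_{B_{1}\cup B_{2}}\partial_{\Bx_{1}}u^{i}$ in \eqref{eq:leessest01}, and all further contributions are $\mathcal{O}(\omega^{2})$. Since the interior wavenumber is $\sqrt{\varepsilon_{1}}\,\omega=\mathcal{O}(\omega^{3/2})$ and, by hypothesis, $\omega\,\epsilon^{\alpha_{-}}\ll 1$ controls the largest geometric scale, these expansions can be made uniform in $\epsilon$.

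\medskip
\noindent\textbf{Singular decomposition and the prefactor.}
For the leading electrostatic density $\phi_{0}$, I would use the explicit spectral resolution of $\mathbb{K}^{*}$ in bipolar coordinates adapted to $B_{1},B_{2}$ (equivalently, an iterated-reflection/image-charge construction), writing $u^{*}_{0}=u_{\mathrm{s}}+u_{\mathrm{r}}$ with $u_{\mathrm{s}}$ the combination of the two singular potentials whose gradient blows up in the neck and $u_{\mathrm{r}}$ a bounded remainder; the bound $\|\nabla u_{\mathrm{r}}\|_{L^{\infty}}=\mathcal{O}(1)$ is Section 4. A Taylor expansion of the Neumann data of $u^{i}$ at the near-touching point $\mathbf{0}$ shows that the coefficient of $u_{\mathrm{s}}$ equals $\partial_{\Bx_{1}}u^{i}(\mathbf{0})$ up to relative errors of order $\epsilon^{\min(\alpha_{+},1)}$. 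Evaluating $\nabla u_{\mathrm{s}}$ in the neck, whose half-width at height $y$ is $\epsilon+\tfrac{y^{2}}{2}\big(\tfrac{1}{r_{1}}+\tfrac{1}{r_{2}}\big)$, produces the prefactor $r_{-}^{-1}\epsilon^{\min(\alpha_{+},1)/2-1/2}$, with $r_{-}$ as in \eqref{eq:thdenr01} arising from the way the two curvatures $\sim\epsilon^{-\alpha_{1}}$, $\sim\epsilon^{-\alpha_{2}}$ combine. Multiplying this prefactor by the static coefficient and the frequency correction, and choosing $u^{i}$ generic so that the bracketed quantity in \eqref{eq:leessest01} does not vanish, yields both the upper bound and the matching lower bound, hence the equivalence ``$\sim$''; here $C_{0}$ is the leading coefficient of $\tau$ from \eqref{eq:deftau01}. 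This is Section 5.

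\medskip
\noindent\textbf{Main obstacle.}
The central difficulty, and the reason the classical static arguments do not transfer verbatim, is that the maximum principle is unavailable for the Helmholtz transmission system, so neither the boundedness of the non-singular part $u_{\mathrm{r}}$ nor the uniformity of the $\omega$-expansion can be obtained by comparison; one must instead establish resolvent estimates for $(\lambda(\varepsilon_{1})\mathbb{I}-\mathbb{K}^{*})^{-1}$ that are uniform simultaneously in $\epsilon$, in $\omega$, and in the scaling exponents $\alpha_{1},\alpha_{2}$. This is delicate precisely because the high-contrast scaling $\varepsilon_{1}=\mathcal{O}(\omega)$ forces $\lambda(\varepsilon_{1})$ toward the endpoint of $[-\tfrac{1}{2},\tfrac{1}{2}]$, which for the touching configuration is an accumulation point of the spectrum of $\mathbb{K}^{*}$, while the multi-scale geometry further deforms that spectrum; quantitatively separating the single near-eigenfunction responsible for the blowup from the remainder of the spectrum, uniformly in all these parameters, is where the bulk of the work in Sections 4--5 lies.
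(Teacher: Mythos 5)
Your overall architecture coincides with the paper's: a layer-potential representation of \eqnref{eq:helm00}, a quasi-static expansion of the Hankel kernel, reduction to an effective perfectly-conducting problem, and a singular/regular splitting in which the singular part is the image-charge potential attached to the fixed points of the composed reflections and the regular part has uniformly bounded gradient. Two points, however, are genuine gaps rather than omitted routine detail.

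First, your mechanism for the $\omega^{2}|\ln\omega|$ term is not the one that produces it, and as stated it cannot. You attribute the term to ``the total charges $\int_{\partial B_{j}}\phi_{0}$''; but in the reduced problem \eqnref{eq:helm01} the constants $\lambda_{j}$ are chosen precisely so that the net flux through each $\partial B_{j}$ is $\mathcal{O}(\omega^{2})$, so the monopole channel contributes no $\omega^{2}\ln\omega$ term proportional to $\int_{B_{1}\cup B_{2}}\partial_{\Bx_{1}}u^{i}$. In the paper the term comes from the smooth correction $A_{\omega}(\Bx)=-\frac{1}{4\pi}|\Bx|^{2}\omega^{2}\ln\omega+\mathcal{O}(\omega^{2})$ (see \eqnref{eq:asympak01}) entering the singular function $q_{\omega}=\Gamma_{\omega}(\cdot-\mathbf{p}_{1})-\Gamma_{\omega}(\cdot-\mathbf{p}_{2})$ of \eqnref{eq:defqk01}: when $\lambda_{2}-\lambda_{1}$ is computed by pairing $u-u^{i}$ with $\partial_{\nu}q_{\omega}$ and integrating by parts, as in \eqnref{eq:estlam01}, the difference $q_{\omega}-q_{0}$ contributes first (dipole) moments that the divergence theorem converts into $\frac{1}{\pi}\omega^{2}\ln\omega\int_{B_{1}\cup B_{2}}\nabla u^{i}\cdot(\mathbf{p}_{1}-\mathbf{p}_{2})$; the factor $|\mathbf{p}_{1}-\mathbf{p}_{2}|$ is exactly what later cancels against $C_{1}-C_{2}$ to leave the coefficient in the asserted estimate. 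Without this pairing your expansion either misses the frequency part or assigns it an incorrect coefficient.

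Second, everything quantitative in the theorem --- the exponent $\min(\alpha_{+},1)/2-1/2$, the constant $r_{-}$ of \eqnref{eq:thdenr01}, and $C_{0}$ --- resides in the multi-scale asymptotics of the fixed points \eqnref{eq:rinv01} and of $\tau$ in \eqnref{eq:deftau01}, and in the resulting expansions of $C_{1}-C_{2}$, $\lambda_{1}-\lambda_{2}$ and $\|\nabla q_{0}\|_{L^{\infty}}$, including the case split $\alpha_{+}<1$ versus $\alpha_{+}\geq 1$ (and $\alpha_{-}\geq 1$, where no blowup occurs). Your sketch gestures at the neck width but never carries out these computations, which are precisely the new content relative to the equal-scale case. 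Finally, the ``main obstacle'' you identify --- uniform resolvent estimates for $(\lambda\mathbb{I}-\mathbb{K}^{*})^{-1}$ near the endpoint of the spectral interval --- is not needed on the paper's route: Lemma \ref{le:decom01} converts the high-contrast Helmholtz transmission problem into the constant-Dirichlet-data problem up to $\mathcal{O}(\omega^{2})$, after which the singular part is explicit and the boundedness of the regular part (Lemma \ref{le:01}) follows from an elementary finite-difference argument at the two closest boundary points rather than from any spectral separation.
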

\begin{rem}
It is worth mentioning that if $\alpha_1=\alpha_2=0$ then from \eqnref{eq:deftau01}, one has
$$
C_0=\sqrt{2r_{1,0}r_{2,0}(r_{1,0}+r_{2,0})},
$$
and there holds the estimate
\[
\begin{split}
\|\nabla u^*\|_{L^{\infty}(\Omega\setminus\overline{B_1\cup B_2})}\sim &\sqrt{\frac{2r_{1,0}r_{2,0}}{(r_{1,0}+r_{2,0})}}\epsilon^{-1/2}\Big(\p_{\Bx_1} u^i(\mathbf{0})+\frac{1}{\pi}\omega^2|\ln \omega|\int_{B_1\cup B_2} \p_{\Bx_1} u^i \\
&\quad\quad\quad\quad\quad\quad\quad\quad\quad+\Ocal(\omega^2)\Big)+\Ocal(1),
\end{split}
\]
which recovers the blowup estimate for the static case (\cite{AKLL07,BLY09,KLY13}).
\end{rem}
\begin{rem}
It can be seen that if one inclusion is of high curvature, i.e., $\alpha_+>0$ and $\p_{\Bx_1} u^i_0(\mathbf{0})\neq 0$, then the blowup rate is
$\epsilon^{\min(\alpha_+,1)/2-1/2}$, which is less than $\epsilon^{-1/2}$. No blow up occurs in the case that $\alpha_+\geq 1$.
\end{rem}
\begin{rem}
We emphasise that the estimate \eqnref{eq:leessest01} also holds for the low curvature case, i.e., $\alpha_+<0$. In such case, the blowup rate is
$\epsilon^{\alpha_+/2-1/2}$, which is bigger than $\epsilon^{-1/2}$, if $\p_{\Bx_1} u^i_0(\mathbf{0})\neq 0$. Moreover, even if $\p_{\Bx_1} u^i(\mathbf{0})= 0$, one can still have the blowup if $\p_{\Bx_1} u_1^i(\mathbf{0})= 0$ and
$$
-\log_{\epsilon}\omega<\alpha_+< 1-2\log_{\epsilon}\omega,
$$
or
$$
\int_{B_1\cup B_2} \p_{\Bx_1} u_0^i \neq 0,
$$
$\alpha_+$ satisfies
$$
-\log_{\epsilon}\omega<\alpha_+< 1-2\log_{\epsilon}(\omega^2|\ln \omega|).
$$
\end{rem}
\subsection{Key decompositions}
In this subsection, we present the main auxiliary results that we shall derive in order to prove the main result in Theorem~\ref{th:main01}, whose proofs are deferred to the subsequent sections. To estimate the gradient filed of the solution to \eqnref{eq:helm00}, we shall decompose the system into several parts.
We first introduce the following system:
\beq\label{eq:helm01}
\begin{cases}
\Delta u + \omega^2 u = 0 \hspace*{.7cm} \mbox{in} \quad \RR^2\setminus\overline{B_1\cup B_2},\\
u= \lambda_1+\Ocal(\omega^2) \quad \mbox{on} \quad \p B_1,\\
u= \lambda_2+\Ocal(\omega^2)  \quad \mbox{on} \quad \p B_2,\\
(u-u^i)(\Bx) \mbox{ satisfies the Sommerfeld radiation condition,}
\end{cases}
\eeq
where the constants $\lambda_j$, $j=1, 2$ are determined by
\beq
\int_{\p B_j}\p_{\nu} u|_+=\Ocal(\omega^2), \quad j=1, 2,
\eeq
and they are unique up to $\mathcal{O}(\omega^2)$.

We have the following result:
\begin{lem}\label{le:decom01}
Let $u^*$ and $u$ be the solution to system \eqnref{eq:helm00} and \eqnref{eq:helm01}, respectively. Then it holds that
\beq\label{eq:lekeyde01}
\nabla u^*=\nabla u +C\omega+\Ocal(\omega^2) \quad \mbox{in} \quad \RR^2\setminus\overline{B_1\cup B_2},
\eeq
where $C$ is a generic constant that does not depend on $\omega$ and $\epsilon$.
\end{lem}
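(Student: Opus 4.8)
The plan is to combine a boundary integral representation of \eqnref{eq:helm00} with a systematic expansion in the small parameter $\omega$, crucially exploiting the scaling $\varepsilon_1=\Ocal(\omega)$. First I would represent the total field by single layer potentials adapted to the two wavenumbers present in the problem: outside the inclusions the wavenumber is $\omega$, whereas inside $B_j$ the interior equation $\nabla\cdot(\varepsilon_1^{-1}\nabla u^*)+\omega^2u^*=0$ reads $\Delta u^*+\varepsilon_1\omega^2u^*=0$, so the interior wavenumber is $\sqrt{\varepsilon_1}\,\omega=\Ocal(\omega^{3/2})$. Writing $u^*=u^i+\Scal_{B_1}^{\omega}[\varphi_1]+\Scal_{B_2}^{\omega}[\varphi_2]$ in $\RR^2\setminus\overline{B_1\cup B_2}$ and $u^*=\Scal_{B_j}^{\sqrt{\varepsilon_1}\omega}[\psi_j]$ in $B_j$, and imposing the two transmission conditions in \eqnref{eq:helm00}, gives a closed system of boundary integral equations for $(\varphi_1,\varphi_2,\psi_1,\psi_2)$. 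Using the standard small-argument expansions of the two-dimensional Helmholtz layer operators (so that $\Scal_D^{k}=\Scal_D^{0}+\eta_k\langle\,\cdot\,,1\rangle_{\p D}+\Ocal(k^2\ln k)$ for a scalar $\eta_k$, and $(\Kcal_D^{k})^{*}=(\Kcal_D^{0})^{*}+\Ocal(k^2\ln k)$), together with $\varepsilon_1^{-1}=\Ocal(\omega^{-1})$ and the expansion \eqnref{eq:thuexp01}, every quantity in the system can be expanded in powers of $\omega$ (with possible logarithmic factors) and the hierarchy solved order by order.

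The first substantive step is the leading order. Rewriting the flux condition as $\p_\nu u^*|_-=\varepsilon_1\,\p_\nu u^*|_+=\Ocal(\omega)$ and using that inside each $B_j$ the field $u^*$ is harmonic up to an $\Ocal(\omega^3)$ source, one sees that $u^*$ is constant on $\p B_j$ modulo lower order terms, while integrating the interior equation over $B_j$ supplies the normalisation $\int_{\p B_j}\p_\nu u^*|_+=-\omega^2\int_{B_j}u^*=\Ocal(\omega^2)$. Hence the leading part of $u^*$ in $\RR^2\setminus\overline{B_1\cup B_2}$ solves precisely the exterior problem \eqnref{eq:helm01} defining $u$, so $u^*=u+\Ocal(\omega)$ and $\nabla u^*=\nabla u+\Ocal(\omega)$ there. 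What then has to be proved for \eqnref{eq:lekeyde01} is that this $\Ocal(\omega)$ correction, say $v$ with $u^*=u+\omega v+\Ocal(\omega^2)$, has $W^{1,\infty}$ norm on $\Omega\setminus\overline{B_1\cup B_2}$ bounded by a constant \emph{independent of} $\epsilon$.

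For this I would read off $v$ from the expanded system. Its exterior trace on $\p B_j$ is made of two contributions: the $\Ocal(\omega)$ boundary oscillation of $u^*$ inherited from the nonzero interior Neumann data $\varepsilon_1\p_\nu u^*|_+$ (a harmonic function on a disk with $\Ocal(\omega)$ flux data has $\Ocal(\omega)$ gradient, hence an $\Ocal(\omega)$ trace oscillation), and the $\Ocal(\omega)$ correction to the equilibrium constants $\lambda_j$. Imposing the vanishing-total-flux normalisation that pins the $\lambda_j$, now at order $\omega$, one checks that the two constants carried by $v$ on $\p B_1$ and $\p B_2$ coincide, so that the ``potential difference across the gap'' of $v$ is only $\Ocal(\omega)$. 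Since for exterior problems of the type \eqnref{eq:helm01} the gradient blow-up in the gap is driven solely by this potential difference, $\|\nabla v\|_{L^\infty(\Omega\setminus\overline{B_1\cup B_2})}$ stays bounded uniformly in $\epsilon$, which together with the leading order step yields $\nabla u^*=\nabla u+C\omega+\Ocal(\omega^2)$ with $C$ independent of $\omega$ and $\epsilon$.

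The step I expect to be the main obstacle is exactly this $\epsilon$-uniformity of the first-order correction. It needs, first, an $\epsilon$-uniform a priori bound on $\p_\nu u^*|_+$, so that the interior flux data really is $\Ocal(\omega)$ rather than merely $o(1)$; I would obtain this from a Lax--Milgram/energy argument applied to \eqnref{eq:helm00} after renormalising the interior coefficient. Second, it needs the statement that the exterior Helmholtz field produced by $\Ocal(\omega)$ boundary data which is oscillatory with equal mean on the two nearly touching components has gradient $\Ocal(\omega)$ on $\Omega\setminus\overline{B_1\cup B_2}$, uniformly as $\epsilon\to0$. This is precisely the singular/nonsingular splitting carried out for $\nabla u$ in Sections~4 and 5; in practice I would apply that decomposition directly to $u^*-u$, verify that the coefficient of its singular (gap) part is $\Ocal(\omega^2)$, and obtain $C\omega+\Ocal(\omega^2)$ from the regular remainder.
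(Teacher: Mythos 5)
Your overall strategy coincides with the paper's: represent $u^*$ by single layer potentials inside and outside, expand in $\omega$ using $\varepsilon_1=\Ocal(\omega)$ and the small-argument expansions of $\Scal^k$ and $(\Kcal^k)^*$, identify the leading part with the solution $u$ of \eqnref{eq:helm01}, and then show that the $\Ocal(\omega)$ correction carries no potential difference across the gap so that its gradient is bounded uniformly in $\epsilon$. You have also correctly located the crux of the matter in that last uniformity step. However, the route you propose for that step has a genuine gap. You want to treat the interior Neumann data $\varepsilon_1\,\p_\nu u^*|_+$ as pointwise $\Ocal(\omega)$ uniformly in $\epsilon$, obtained ``from a Lax--Milgram/energy argument.'' But $\p_\nu u^*|_+$ is exactly the quantity whose blow-up in the gap (at rate $\epsilon^{\min(\alpha_+,1)/2-1/2}$) Theorem \ref{th:main01} is quantifying, so it is \emph{not} uniformly bounded pointwise as $\epsilon\to 0$; and an energy argument only controls it in $H^{-1/2}(\p B_j)$ (or the interior correction in $H^{1/2}$), which is far too weak to conclude that the trace oscillation of the correction between the two nearest points $\zeta_1=(\epsilon/2,0)$ and $-\zeta_1$ is $\Ocal(\epsilon)$ --- and that pointwise $\Ocal(\epsilon)$ bound is precisely what the difference-quotient gap argument (as in the proof of Lemma \ref{le:01}) requires. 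As stated, your argument is circular: you need a pointwise gradient bound on $u^*$ near the gap to establish the decomposition that is supposed to produce the gradient estimate.

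The paper sidesteps this by solving the transmission system \eqnref{eq:transint01} order by order in $\omega$. At leading order $(-\tfrac{I}{2}+\mathbb{K}^*)[\varphi_{2,0}^*]=0$, so $\Scal_{B_c}^0[\varphi_{2,0}^*]$ is piecewise constant on $\p B_1\cup\p B_2$; consequently the unknown leading density contributes only constants (absorbed into $\lambda_1,\lambda_2$), and the non-constant part of the $\Ocal(\omega)$ boundary trace of the correction is identified \emph{explicitly} as $-\varepsilon_1\big(2\Scal_{B_c}^{0}[\p_\nu u_0^i]+u_0^i\big)$, i.e.\ it involves only the smooth incident field, not the total field. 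With this explicit Dirichlet data one can verify $u_1'(\zeta_1)-u_1'(-\zeta_1)=\Ocal(\epsilon)$ and then run the same argument as for $b_0$ in Lemma \ref{le:01}. To repair your proof you should replace the a priori bound on $\p_\nu u^*|_+$ by this order-by-order identification of the correction's boundary data (or some equivalent device showing the data depends only on $u^i$ up to piecewise constants); the rest of your outline then goes through.
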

In what follows, we shall decompose the solution to \eqnref{eq:helm01} into two parts as follows:
\beq\label{eq:decomp01}
u(\Bx)=a q_\omega(\Bx) +b(\Bx),
\eeq
where $q_\omega(\Bx)$ is the solution to
\beq\label{eq:decomp02}
\begin{cases}
\Delta q_\omega + \omega^2 q_\omega = 0 \quad \mbox{in} \quad \RR^2\setminus\overline{B_1\cup B_2},\\
q_\omega(\Bx) \mbox{ satisfies the Sommerfeld radiation condition.}
\end{cases}
\eeq
The concrete form of $q_\omega$ will be shown in the next section.
Let $q_0$ be the singular function defined by
\beq\label{eq:singufun01}
q_0(\Bx):=\frac{1}{2\pi}(\ln |\Bx-\mathbf{p}_1|- \ln|\Bx-\mathbf{p}_2|),
\eeq
where $\mathbf{p}_1$ and $\mathbf{p}_2$ denote for the fixed points of the reflection $R_1R_2$ and $R_2R_1$, respectively. Here, the reflection $R_j$ with respect to $\p B_j$, centering at $\Bz_j$ and of radius $r_j$, are defined by
\beq
R_j(\Bx):=\frac{r_j^2 (\Bx-\Bz_j)}{|\Bx-\Bz_j|^2}+\Bz_j, \quad j=1, 2.
\eeq
If $\alpha_1=\alpha_2=0$, then it is proved in \cite{Yun07,Yun09} that $\mathbf{p}_j$, $j=1,2$ admits the following asymptotic expansion:
\beq
\mathbf{p}_1=\left(-\sqrt{2}\sqrt{\frac{r_{1,0}r_{2,0}}{r_{1,0}+r_{2,0}}}\sqrt{\epsilon}+\Ocal(\epsilon), 0\right)^T, \quad
\mathbf{p}_2=\left(\sqrt{2}\sqrt{\frac{r_{1,0}r_{2,0}}{r_{1,0}+r_{2,0}}}\sqrt{\epsilon}+\Ocal(\epsilon), 0\right)^T.
\eeq
In this paper, we shall consider the case that $\alpha_1, \alpha_2\neq 0$, and we derive the explicit forms of $\mathbf{p}_1$ and $\mathbf{p}_2$ as follows:
\beq\label{eq:rinv01}
\begin{split}
\mathbf{p}_1=&\left(-\frac{(r_1-r_2)\epsilon/2+\sqrt{\epsilon}\tau(r_1,r_2,\epsilon)}{r_1+r_2+\epsilon}, 0\right)^T,\\
\mathbf{p}_2=&\left(\frac{(r_2-r_1)\epsilon/2+\sqrt{\epsilon}\tau(r_1,r_2,\epsilon)}{r_1+r_2+\epsilon}, 0\right)^T,
\end{split}
\eeq
where
\beq\label{eq:deftau01}
\tau(r_1,r_2,\epsilon)=\sqrt{2r_1r_2(r_1+r_2)+(r_1^2+3r_1r_2+r_2^2)\epsilon+(r_1+r_2)\epsilon^2+\epsilon^3/4}.
\eeq
The formula \eqnref{eq:rinv01} can be verified by straightforward computations.
It can be seen that $q_0$ is the solution to the following equation (see \cite{KY09}):
\beq\label{eq:singufun02}
\begin{cases}
\Delta q_0= 0 \hspace*{1cm} \mbox{in} \quad \RR^2\setminus\overline{B_1\cup B_2},\\
q_0= C_j \hspace*{1.1cm} \mbox{on} \quad \p B_j,\\
\displaystyle{\int_{\p B_j}\p_\nu q_0|_+= (-1)^j,  \quad j=1,2,}\\
q_0(\Bx)=\Ocal(|\Bx|^{-1}) \quad \mbox{as} \quad |\Bx|\rightarrow \infty,
\end{cases}
\eeq
where $C_j$, $j=1,2,$ are
\beq\label{eq:singufun02}
C_j=(-1)^{j-1}\frac{1}{2\pi}\ln\frac{-(2r_j+\epsilon)\sqrt{\epsilon}+2\tau}{(2r_j+\epsilon)\sqrt{\epsilon}+2\tau}, \quad j=1, 2.
\eeq
In what follows, we define $b(\Bx)$ in \eqnref{eq:decomp01} by
\beq\label{eq:defb01}
b(\Bx):= u(\Bx)-\frac{\lambda_1-\lambda_2}{C_1-C_2}q_\omega(\Bx),
\eeq
where $u$, $\lambda_j$ and $C_j$, $j=1, 2$ are defined in \eqnref{eq:helm01} and \eqnref{eq:singufun02}.
We shall prove the following critical result:
\begin{lem}\label{le:01}
Suppose $\omega\cdot\epsilon^{\alpha_-}\ll 1$. Let $b(\Bx)$ be defined in \eqnref{eq:defb01}. Then for any bounded set $\Omega$ containing $\overline{B}_1$ and $\overline{B}_2$, there is a constant $C$ which is independent of $\epsilon$ and $\omega$ such that
\beq\label{eq:leessest01}
\|\nabla b\|_{L^{\infty}(\Omega\setminus\overline{B_1\cup B_2})}\leq C(1+\Ocal(\omega^2)).
\eeq
\end{lem}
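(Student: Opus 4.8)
The plan is to reduce the bound on $\nabla b$ to a bound on the gradient of a solution of the Helmholtz equation that takes \emph{almost constant} boundary values on each of $\p B_1$ and $\p B_2$, and whose ``net flux'' through each boundary component is $\Ocal(\omega^2)$ — i.e. the singular part carried by $q_\omega$ has been subtracted off. More precisely, by the definition \eqnref{eq:defb01} and the properties \eqnref{eq:helm01}, \eqnref{eq:singufun02}, the function $b$ satisfies $\Delta b+\omega^2 b=0$ in $\RR^2\setminus\overline{B_1\cup B_2}$, $b=\lambda_j-\frac{\lambda_1-\lambda_2}{C_1-C_2}C_j+\Ocal(\omega^2)$ on $\p B_j$, and $\int_{\p B_j}\p_\nu b|_+=\Ocal(\omega^2)$ for $j=1,2$. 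A direct computation shows that the two leading-order constants $\lambda_j-\frac{\lambda_1-\lambda_2}{C_1-C_2}C_j$ coincide, so after subtracting that common constant we may assume $b$ has boundary data $\Ocal(\omega^2)$ on each $\p B_j$ together with flux $\Ocal(\omega^2)$ there. In other words, $b$ is, up to an $\Ocal(\omega^2)$ perturbation, the solution of a homogeneous exterior problem with \emph{zero} Dirichlet data on both disks — and such a solution is identically zero. Hence $\|\nabla b\|_{L^\infty(\Omega\setminus\overline{B_1\cup B_2})}$ is controlled by the $\Ocal(\omega^2)$ data, which is exactly \eqnref{eq:leessest01}.

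To make this rigorous I would proceed through the layer-potential representation set up in Section 3. First I would write $b$ via the single-layer potential with densities $\varphi_j$ on $\p B_j$ (plus, if convenient, a far-field correction) adapted to the Helmholtz operator with small frequency $\omega$; the transmission/Dirichlet conditions and the flux conditions translate into a linear system for $(\varphi_1,\varphi_2)$ whose right-hand side is $\Ocal(\omega^2)$ once the $q_\omega$-part is removed. Second, I would analyse the solvability and, crucially, the \emph{$\epsilon$- and $\omega$-uniform} invertibility of the relevant boundary integral operator. The point is that the near-touching degeneracy of the static Neumann–Poincaré operator on the pair $(\p B_1,\p B_2)$ is entirely concentrated in the one-dimensional ``antisymmetric'' mode whose eigenvalue tends to $\pm\tfrac12$; subtracting the $q_\omega$ term from $u$ in \eqnref{eq:defb01} is precisely designed to project that mode out, so that on the orthogonal complement the operator is invertible with a bound independent of $\epsilon$. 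Third, given uniform invertibility, $\|\varphi_j\|$ is $\Ocal(\omega^2)$ and the standard interior gradient estimate for single-layer potentials away from the boundary (plus a local estimate near $\p B_1\cup\p B_2$ exploiting that the ``dangerous'' density component has been killed) yields $\|\nabla b\|_{L^\infty(\Omega\setminus\overline{B_1\cup B_2})}=\Ocal(1)+\Ocal(\omega^2)$.

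Several technical points need care, and I would want to flag where the multi-scale hypothesis $\alpha_j\neq 0$ and the smallness hypothesis $\omega\cdot\epsilon^{\alpha_-}\ll 1$ enter. The Helmholtz single-layer kernel differs from the Laplace one by $\tfrac{1}{2\pi}\ln\omega$ plus lower-order terms, and on a disk of radius $r_j=r_{j,\alpha_j}\epsilon^{\alpha_j}$ the natural small parameter is $\omega r_j$, equivalently $\omega\epsilon^{\alpha_j}$; the assumption $\omega\cdot\epsilon^{\alpha_-}\ll1$ guarantees this is small for \emph{both} inclusions, which is what lets us treat the frequency contribution perturbatively around the static operator uniformly in the disparity of scales. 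One must also check that the logarithmic $\ln\omega$ term, which is $\omega$-independent at leading order only up to a multiplicative constant, does not spoil the $\Ocal(\omega^2)$ right-hand side after the $q_\omega$-subtraction — this is where the precise normalisation in \eqnref{eq:defb01}, $\frac{\lambda_1-\lambda_2}{C_1-C_2}$, matters, since it cancels both the static and the $\ln\omega$-scale singular pieces.

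The main obstacle, and the step I expect to absorb most of the work, is the \emph{$\epsilon$-uniform invertibility estimate} for the boundary integral operator restricted to the complement of the antisymmetric mode, in the regime where the two disks have radii of vastly different orders $\epsilon^{\alpha_1}$ and $\epsilon^{\alpha_2}$. In the equal-size case this follows from Yun's and Kang--Yun's spectral analysis of the Neumann--Poincaré operator for two nearly touching disks, but for genuinely multi-scale radii the geometry is different: the relevant conformal (bipolar-type) coordinates degenerate, and one must track how the spectral gap depends on $\alpha_+-\alpha_-$. I would handle this by using the explicit fixed points $\mathbf{p}_1,\mathbf{p}_2$ in \eqnref{eq:rinv01} and the function $\tau$ in \eqnref{eq:deftau01} to build the correct conformal map and singular function $q_0$, decompose the operator as (scaled identity) $+$ (compact, $\epsilon$-uniformly bounded) $+$ (rank-one antisymmetric piece), and then invoke a Fredholm argument on the orthogonal complement together with the quantitative separation of the antisymmetric eigenvalue from $\pm\tfrac12$ read off from $C_1-C_2$; the rest of the proof is then bookkeeping of the $\Ocal(\omega^2)$ remainders.
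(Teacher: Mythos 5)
There is a genuine gap, and it originates in your first paragraph. You argue that, since the two constants $\lambda_j-\frac{\lambda_1-\lambda_2}{C_1-C_2}C_j$ coincide (they do, both equal $\tilde C_1=(\lambda_2C_1-\lambda_1C_2)/(C_1-C_2)$), the function $b$ is an $\Ocal(\omega^2)$ perturbation of the solution of a homogeneous exterior problem with zero Dirichlet data, hence is itself $\Ocal(\omega^2)$. This overlooks that the Sommerfeld radiation condition in \eqnref{eq:helm01}, and hence in \eqnref{eq:bxeq01}, is imposed on $b-u^i$, not on $b$: $b$ is the \emph{total} field of a scattering problem with an $\Ocal(1)$ incident wave and constant Dirichlet data $\tilde C_1$ on $\p B_1\cup\p B_2$, so it is not small. (Two further slips in the same step: subtracting a nonzero constant does not preserve the Helmholtz equation, and the flux of $b$ through $\p B_j$ is $\Ocal(\omega^2)-(-1)^j a$ with $a=(\lambda_1-\lambda_2)/(C_1-C_2)$, not $\Ocal(\omega^2)$.) Consistently, the lemma only claims $\|\nabla b\|\leq C(1+\Ocal(\omega^2))$, i.e.\ an $\Ocal(1)$ bound; your route would "prove" the false stronger bound $\Ocal(\omega^2)$. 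The correct reduction is $b=u^i+b_0+\Ocal(\omega^2)$, where $b_0$ is the harmonic function of \eqnref{eq:b0eq01} with boundary data $\tilde C_1-u^i$ and decay at infinity, and the entire content of the lemma is the $\epsilon$-uniform bound on $\nabla b_0$.

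That remaining bound is precisely what your proposal does not supply. You defer it to an $\epsilon$-uniform invertibility of the two-disk boundary integral operator on the complement of the antisymmetric mode, in the multi-scale regime, and you yourself flag this as the main unresolved obstacle; since this is exactly where the difficulty of the lemma lives, the argument is incomplete at its key step. The paper avoids any spectral analysis: each component of $\nabla b_0$ is harmonic in $\RR^2\setminus\overline{B_1\cup B_2}$ and decays like $|\Bx|^{-2}$, so $|\nabla b_0|$ attains its maximum on $\p B_1\cup\p B_2$; at the near-touching points $\pm\zeta_1=(\pm\tfrac{\epsilon}{2},0)$ the normal derivative is controlled by the difference quotient $\bigl(b_0(-\zeta_1)-b_0(\zeta_1)\bigr)/\epsilon$, in which the common constant $\tilde C_1$ cancels and only $\bigl(u^i(\zeta_1)-u^i(-\zeta_1)\bigr)/\epsilon=\Ocal(1)$ survives, while the tangential derivative equals $-\p_T u^i$ there. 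To repair your proof you would either need to actually establish the uniform invertibility with its dependence on $\alpha_1,\alpha_2$, or adopt a gap-difference/maximum-type argument of this kind as in \cite{BLY09,KLY13}.
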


\section{Quantitative approximations of the solution}
\subsection{Layer potentials}
Before the estimation of the gradient field, we introduce some necessary notations and results on the layer potential operators, which shall be need in our subsequent analysis. Let $\Gamma_\omega(\Bx)$ be the fundamental solution to PDE operator $\Delta+\omega^2$ in $\RR^2$, given by
\begin{equation}
\label{Gk} \ds \Gamma_\omega(\Bx) =
-\frac{i}{4} H_0^{(1)}(\omega|\Bx|),
 \end{equation}
 where $H_0^{(1)}(\omega|\Bx|)$ is the Hankel function of the first kind and zeroth order. We mention that if $\omega=0$ then $\Gamma_0(\Bx)=\frac{1}{2\pi}\ln|\Bx|$.
For any bounded $C^{2,\alpha}$ domain $B\subset \RR^2$, $\alpha>0$, we denote by $\Scal_B^\omega: L^2(\p B)\rightarrow H^{1}(\RR^2\setminus\p B)$ the single layer potential operator given by
\beq\label{eq:layperpt1}
\Scal_{B}^\omega[\phi](\Bx):=\int_{\p B}\Gamma_\omega(\Bx-\By)\phi(\By)\; d s_\By,
\eeq
and $(\Kcal_B^\omega)^*: L^2(\p B)\rightarrow L^2(\p B)$ the Neumann-Poincar\'e operator
\beq\label{eq:layperpt2}
(\Kcal_{B}^\omega)^{*}[\phi](\Bx):=\mbox{p.v.}\quad\int_{\p B}\frac{\p \Gamma_\omega(\Bx-\By)}{\p \nu_\Bx}\phi(\By)\; d s_\By,
\eeq
where p.v. stands for the Cauchy principle value. In \eqref{eq:layperpt2} and also in what follows, unless otherwise specified, $\nu$ signifies the exterior unit normal vector to the boundary of the concerned domain.
We also introduce the double layer potential $\Dcal_B^\omega: L^2(\p D)\rightarrow H^{1}(\RR^2\setminus\p B)$ given by
\beq\label{eq:layperpt1}
\Dcal_{B}^\omega[\phi](\Bx):=\int_{\p B}\frac{\p \Gamma_\omega(\Bx-\By)}{\p \nu_\By}\phi(\By)\; d s_\By.
\eeq
It is known that the single layer potential operator $\Scal_B^\omega$ is continuous across $\p B$ and satisfies the following trace formula
\beq \label{eq:trace}
\frac{\p}{\p\nu}\Scal_B^\omega[\phi] \Big|_{\pm} = (\pm \frac{1}{2}I+
(\Kcal_{B}^k)^*)[\phi] \quad \mbox{on} \quad \p B, \eeq
where $\frac{\p }{\p \nu}$ stands for the normal derivative and the subscripts $\pm$ indicate the limits from outside and inside of a given inclusion $B$, respectively.
The double layer potential operator $\Dcal_B^\omega$ satisfies the following trace formula across $\p B$:
\beq \label{eq:trace2}
\Dcal_B^\omega[\phi] \Big|_{\pm} = (\mp \frac{1}{2}I+
\Kcal_{B}^k)[\phi] \quad \mbox{on} \quad \p B. \eeq
When $\omega=0$ the operators $\Scal_B^0$ and $\Dcal_B^0$ stand for the single layer potential operator and double layer potential with kernel function $\Gamma_0$.

\subsection{Asymptotic estimates}
Recall that the Bessel function $J_0(\omega|\Bx)$ and the Neumann function $Y_0(\omega|\Bx)$ admit the following integral formula (see, e.g., \cite{MiIr72}):
\beq\label{eq:intforHan01}
\begin{split}
J_0(\omega|\Bx|)&=\frac{1}{\pi}\int_0^{\pi} \cos(\omega|\Bx|\cos\theta)d\theta, \\
Y_0(\omega|\Bx|)&=\frac{4}{\pi^2}\int_0^{\pi/2}\cos(\omega|\Bx|\cos\theta)(\gamma+\ln(2\omega|\Bx|\sin^2\theta))d\theta,
\end{split}
\eeq
where $\gamma=0.5772...$ is the Euler-Mascheroni constant.
The Hankel function appeared in \eqnref{Gk} can be represented by
\beq\label{eq:intforHan02}
H_0^{(1)}(\omega|\Bx|)=-\frac{i}{4}J_0(\omega|\Bx|)+\frac{1}{4}Y_0(\omega|\Bx|).
\eeq
Note that for $\omega$ sufficiently small, one has the following asymptotic result:
\beq\label{eq:asyHan01}
\Gamma_\omega(\Bx)=a_\omega+\Gamma_0(\Bx)+A_\omega(\Bx),
\eeq
where $a_\omega$ is a constant defined by
$$
a_\omega:=-\frac{i}{4}+\frac{\gamma}{2\pi}+\frac{1}{2\pi}\ln\frac{\omega}{2},
$$
and the function $A_\omega(\Bx)$ is defined by
\beq\label{eq:asyHan02}
\begin{split}
A_\omega(\Bx):=&\frac{i}{4\pi}|\Bx|\int_0^{\pi}\sin(\eta|\Bx|\cos\theta)\cos\theta d\theta\\
&-\frac{1}{\pi^2}|\Bx|\int_0^{\pi/2}\sin(\eta|\Bx|\cos\theta)\cos\theta (\gamma+\ln(2\omega|\Bx|\sin^2\theta)) d\theta,
\end{split}
\eeq
where $\eta\in(0, \omega)$ is some fixed positive number. It is worth mentioning that $A_\omega$ is a smooth function in $\RR^2$ for any $\omega\in \RR_+$. Besides one has
\beq\label{eq:asympak01}
A_\omega(\Bx)=-\frac{1}{4\pi}|\Bx|^2\omega^2\ln \omega+\Ocal(\omega^2).
\eeq
We define the boundary integral $\mathcal{A}_B^\omega$ by
\beq\label{eq:defAk01}
\mathcal{A}_B^\omega[\phi](\Bx):=\int_{\p B} A_\omega(\Bx-\By)\phi(\By) ds_\By.
\eeq
In the sequel, we let $q_\omega$ be the following singular function:
\beq\label{eq:defqk01}
q_\omega:=\Gamma_\omega(\Bx-\mathbf{p}_1)-\Gamma_\omega(\Bx-\mathbf{p}_2)=q_0+A_\omega(\Bx-\mathbf{p}_1)-A_\omega(\Bx-\mathbf{p}_2).
\eeq
\subsection{First approximation}\label{sec:fst01}
We next consider the solution to \eqnref{eq:helm00}. By imploring the layer potential techniques, one can represent the solution to \eqnref{eq:helm00} by
\beq
u^*=\left\{
\begin{split}
&u^i+\Scal_{B_c}^{\omega}[\varphi_1^*]\hspace*{.5cm}  \mbox{in} \quad \RR^2\setminus\overline{B_c},\\
&\Scal_{B_c}^{k_c}[\varphi_2^*]\hspace*{1.38cm}  \mbox{in} \quad B_c,
\end{split}
\right.
\eeq
where $B_c:=B_1\cup B_2$ and $k_c=\omega\sqrt{\varepsilon_1}$. By using the transmission conditions across $\p B_c$, there holds
\beq\label{eq:transint01}
\begin{split}
\mathbf{A}_{B_c}^{\omega}[\bm{\varphi}^*]=\bm{U}   \quad \mbox{on} \quad \p B_c,
\end{split}
\eeq
where the operator $\mathbf{A}_{B_c}^{\omega}: H^{-1/2}(\p B_c)\times H^{-1/2}(\p B_c)\rightarrow H^{1/2}(\p B_c)\times H^{-1/2}(\p B_c)$ is defined by
\beq\label{eq:transint02}
\mathbf{A}_{B_c}^{\omega}:=\left(
\begin{array}{cc}
-\Scal_{B_c}^{\omega} & \Scal_{B_c}^{k_c} \\
-\big(\frac{I}{2}+(\Kcal_{B_c}^{\omega})^*\big) & \frac{1}{\varepsilon_1} \big(-\frac{I}{2}+(\Kcal_{B_c}^{k_c})^*\big)
\end{array}
\right),
\eeq
and
\beq\label{eq:transint03}
\bm{\varphi}^*=\left(
\begin{array}{c}
\varphi_1^* \\
\varphi_2^*
\end{array}
\right), \quad
\bm{U}=\left(
\begin{array}{c}
u^i \\
\frac{\p u^i}{\p \nu}
\end{array}
\right).
\eeq
For the later use, we define the operator $\mathbb{S}$ by
\beq\label{eq:defSk01}
\mathbb{S}:=
\left(
\begin{array}{cc}
\Scal_{B_1}^0|_{\p B_1}  & \Scal_{B_2}^0|_{\p B_1} \\
\Scal_{B_1}^0|_{\p B_2}  & \Scal_{B_2}^0|_{\p B_2}
\end{array}
\right),
\eeq
and the operator $\mathbb{K}^*$ by
\beq
\mathbb{K}^*:=\left(
    \begin{array}{cc}
       (\Kcal_{B_1}^{0})^* &  \p_{\nu_1}\Scal_{B_2}^0 \medskip \\
     \p_{\nu_2}\Scal_{B_1}^0 & (\Kcal_{B_2}^{0})^*\\
    \end{array}
  \right),
\eeq
where $\nu_1$ and $\nu_2$ are the unit normal directions to $\p B_1$ and $\p B_2$, respectively.
It can be verified that $\mathbb{S}=\Scal_{B_c}^0$ and $\mathbb{K}^*=(\Kcal_{B_c}^0)^*$.
Similar to the Calder\'on type identity introduced in \cite{ACKL13}, we have the following identity:
\beq\label{eq:cald01}
\mathbb{S}\mathbb{K}^*=\mathbb{K}\mathbb{S},
\eeq
where $\mathbb{K}$ is the ajoint operator of $\mathbb{K}^*$ given by
$$
\mathbb{K}:=\left(
    \begin{array}{cc}
       \Kcal_{B_1}^{0} &  \Dcal_{B_2}^0|_{\p B_1} \medskip \\
     \Dcal_{B_1}^0|_{\p B_2} & \Kcal_{B_2}^{0}\\
    \end{array}
  \right).
$$
For completeness and convenient reference to the reader, we shall present the proof to the identity \eqnref{eq:cald01} in Appendix \ref{app:cald01}.

\begin{proof}[Proof of Lemma \ref{le:decom01}]
By using the asymptotic estimates in the previous section, one can derive the following asymptotic expansions for the layer potentials:
\beq\label{eq:asymsgdb01}
\begin{split}
\Scal_{B_c}^{\omega}[\varphi]=&a_\omega\int_{\p B_c} \varphi + \Scal_{B_c}^0[\varphi]+\mathcal{A}_{B_c}^\omega[\varphi],\\
(\Kcal_{B_c}^{\omega})^*[\varphi]=&(\Kcal_{B_c}^{0})^*[\varphi]+\p_\nu\mathcal{A}_{B_c}^\omega[\varphi].
\end{split}
\eeq
By using \eqnref{eq:transint01} and the definition of $a_\omega$, one has
\beq\label{eq:phiint01}
\int_{\p B_c}\varphi_1^*=\int_{\p B_c}\varphi_2^* +\Ocal(\omega).
\eeq
We declare that there holds the decomposition $u^*=u+u'$ in $\RR^2\setminus\overline{B_1\cup B_2}$, where $u'$ is the solution to
\beq\label{eq:helm02}
\begin{cases}
\Delta u' + \omega^2 u' = 0, \quad \mbox{in} \quad \RR^2\setminus\overline{B_1\cup B_2}\\
u'=\Ocal(\omega) , \quad \mbox{on} \quad \p B_1\cup \p B_2\\
u'(\Bx) \mbox{ satisfies the Sommerfeld radiation condition,}
\end{cases}
\eeq
together with the relation
\beq
\int_{\p B_j} \p_\nu u'=\Ocal(\omega^2), \quad j=1, 2.
\eeq
In fact, we assume that $\varphi_1^*$ and $\varphi_2^*$ admits the following asymptotic expansions:
\[
\begin{split}
\varphi_1^*=&\varphi_{1,0}^*+\omega\ln\omega\varphi_{1,1}^*+\omega\varphi_{1,2}^*+\omega^2\ln \omega\varphi_{1,3}^*+\Ocal(\omega^2), \\
\varphi_2^*=&\varphi_{2,0}^*+\omega\ln\omega\varphi_{2,1}^*+\omega\varphi_{2,2}^*+\omega^2\ln \omega\varphi_{2,3}^*+\Ocal(\omega^2).
\end{split}
\]
It then follows from \eqnref{eq:transint01} and the asymptotic expansion \eqnref{eq:asymsgdb01} that
\beq
\left(-\frac{I}{2}+\mathbb{K}^*\right)[\varphi_{2,0}^*+\omega\ln \omega\varphi_{2,1}^*]=0, \quad \int_{\p B_j} \varphi_{1}^*=\Ocal(\omega^2).
\eeq
Thus one has
\beq
\mathbb{S}[\varphi_{2,0}^*]=\lambda_{1,1}\chi(\p B_1)+\lambda_{2,1}\chi(\p B_2), \quad \mathbb{S}[\varphi_{2,1}^*]=\lambda_{1,2}\chi(\p B_1)+\lambda_{2,2}\chi(\p B_2),
\eeq
where $\lambda_{j,l}$, $j, l=1, 2$ are constants. It follows by straightforward computations that
\beq
\begin{split}
\Scal_{B_c}^{0}[\varphi_{2,2}^*]=&-2\frac{\epsilon_1}{\omega}\left(\Scal_{B_c}^{0}[\varphi_{2,0}^*]+\Scal_{B_c}^{0}[\p_\nu u_0^i]-\left(-\frac{I}{2}+\Kcal_{B_c}^{0}\right)[u_0^i]\right),\\
\Scal_{B_c}^{0}[\varphi_{2,3}^*]=&-2\frac{\epsilon_1}{\omega}\Scal_{B_c}^{0}[\varphi_{2,1}^*].
\end{split}
\eeq
One thus has
\beq
\begin{split}
\Scal_{B_c}^{k_c}[\varphi_2^*]=&\Scal_{B_c}^{0}[\varphi_{2,0}^*+\omega\ln \omega\varphi_{2,1}^*+\omega^2\ln \omega\varphi_{2,3}^*]+\omega \Scal_{B_c}^{0}[\varphi_{2,2}^*]+\Ocal(\omega^2)\\
=&\left\{
\begin{array}{ll}
\lambda_1-2\epsilon_1 \Scal_{B_c}^{0}[\p_\nu u_0^i]-\epsilon_1 u_0^i+\Ocal(\omega^2) \quad \mbox{on} \quad \p B_1,\medskip\\
\lambda_2-2\epsilon_1 \Scal_{B_c}^{0}[\p_\nu u_0^i]-\epsilon_1 u_0^i+\Ocal(\omega^2) \quad \mbox{on} \quad \p B_2.
\end{array}
\right.
\end{split}
\eeq
One can thus set
$$
u=u^i+\Scal_{B_c}^{\omega}[\varphi_1^*]-2\epsilon_1 \Scal_{B_c}^{0}[\p_\nu u_0^i]-\epsilon_1 u_0^i +\Ocal(\omega^2),
$$
and the higher order term is arranged such that $\Delta u+ \omega^2 u=0$ holds in $\RR^2\setminus\overline{B_1\cup B_2}$.
Now it is readily verified that $u'=u^*-u$ satisfies \eqnref{eq:helm02}. More precisely, one has
$$
u'=-2\epsilon_1 \Scal_{B_c}^{0}[\p_\nu u_0^i]-\epsilon_1 u_0^i+\Ocal(\omega^2) \quad \mbox{on} \quad \p B_1\cup \p B_2.
$$
Suppose $u'=\epsilon_1 u_1' +\Ocal(\omega^2)$, where $u_1'$ is the solution to
\beq\label{eq:helm03}
\begin{cases}
\Delta u_1' = 0 \hspace*{3cm} \mbox{in} \quad \RR^2\setminus\overline{B_1\cup B_2},\smallskip\\
\displaystyle{u_1'=-2\Scal_{B_c}^{0}[\p_\nu u_0^i]-u_0^i} \hspace*{.4cm} \mbox{on} \quad \p B_1\cup \p B_2,\smallskip\\
u_1'(\Bx)=\Ocal(|\Bx|^{-1}).
\end{cases}
\eeq

We mention that $\nabla u_1'$ is uniformly bounded with respect to the distance $\epsilon$. In fact, the solution to \eqnref{eq:helm03} can
be represented by
$$
u_1' =\Scal_{B_c}[\varphi'](\Bx), \quad \Bx\in \RR^2\setminus\overline{B_1\cup B_2},
$$
where the $\varphi'$ satisfy
$$
\int_{\p B_c}\varphi'=0,
$$
and
\beq\label{eq:intbyp01}
\left(-\frac{I}{2} +\mathbb{K}^*\right)[\varphi']=-2\mathbb{K}^*[\p_\nu u_0^i] \quad \mbox{on} \quad \p B_c.
\eeq
One can show that
$$
u_1'(\zeta_1)-u_1'(-\zeta_1)=\epsilon(\p_\nu u_0^i(\zeta_1)-2\p_\nu u_0^i(-\zeta_1))+\Ocal(\epsilon^2),
$$
where $\zeta_1=(\frac{\epsilon}{2}, 0)$. One can then prove that $u_1$ is uniformly bounded by using the same strategy in the proof of Lemma \ref{le:01}.
\end{proof}
\subsection{Further approximation}
In order to prove the main result, we need to estimate the key quantities at the right hand side of \eqnref{eq:decomp01}, where
$$
a=\frac{\lambda_1-\lambda_2}{C_1-C_2}.
$$
By using \eqnref{eq:asympak01}, one has
\beq\label{eq:estlam01}
\begin{split}
\lambda_2-\lambda_1=&\int_{\p B_2} u\p_\nu q_0+\int_{\p B_1} u\p_\nu q_0 +\Ocal(\omega^2)\\
=&\int_{\p B_2} (u-u^i)\p_\nu q_\omega+\int_{\p B_1} (u-u^i)\p_\nu q_\omega +\int_{\p B_1\cup \p B_2} u^i \p_\nu q_0 \\
&+\frac{1}{2\pi} \omega^2\ln \omega\int_{B_1\cup B_2} \nabla u^i \cdot (\mathbf{p}_1-\mathbf{p}_2)+\Ocal(\omega^2)\\
=&\int_{\p B_1\cup \p B_2}\p_\nu (u-u^i) q_\omega+\int_{\p B_1\cup \p B_2} u^i \p_\nu q_0+\Ocal(\omega^2)\\
=&\frac{1}{\pi}\omega^2\ln \omega\int_{B_1\cup B_2} \nabla u^i \cdot (\mathbf{p}_1-\mathbf{p}_2)+\int_{\p B_1\cup \p B_2} u^i \p_\nu q_0+\Ocal(\omega^2)\\
=&u^i(\mathbf{p}_1)-u^i(\mathbf{p}_2)+\frac{1}{\pi}\omega^2\ln \omega\int_{B_1\cup B_2} \nabla u^i \cdot (\mathbf{p}_1-\mathbf{p}_2)+\Ocal(\omega^2),
\end{split}
\eeq
where we have used the results
\[
\begin{split}
&\int_{\p B_1\cup \p B_2} (u-u^i) \p_\nu(q_\omega-q_0)\\
=& \frac{1}{4\pi} \omega^2\ln \omega\int_{\p B_1\cup \p B_2} (u-u^i) \p_\nu (|\Bx-\mathbf{p}_2|^2-|\Bx-\mathbf{p}_1|^2)\\
=&\frac{1}{2\pi} \omega^2\ln \omega\int_{\p B_1\cup \p B_2} (u-u^i) \nu \cdot (\mathbf{p}_1-\mathbf{p}_2)\\
=&\frac{1}{2\pi} \omega^2\ln \omega\int_{B_1\cup B_2} \nabla u^i \cdot (\mathbf{p}_1-\mathbf{p}_2),
\end{split}
\]
and
\[
\begin{split}
&\int_{\p B_1\cup \p B_2}\p_\nu (u-u^i) (q_\omega-q_0)\\
=&\frac{1}{2\pi}\omega^2\ln \omega\left(r_1^2\int_{\p B_1}\p_\nu (u-u^i)\frac{\Bx-\Bz_1}{|\Bx-\Bz_1|^2}+r_2^2\int_{\p B_2}\p_\nu (u-u^i)\frac{\Bx-\Bz_2}{|\Bx-\Bz_2|^2}\right)\cdot(\mathbf{p}_1-\mathbf{p}_2)\\
&+\Ocal(\omega^2)=\frac{1}{2\pi} \omega^2\ln \omega\int_{B_1\cup B_2} \nabla u^i \cdot (\mathbf{p}_1-\mathbf{p}_2)+\Ocal(\omega^2).
\end{split}
\]
Moreover, one has
\beq\label{eq:asymnqk01}
\begin{split}
\nabla q_\omega=&\nabla q_0 +\omega^2\ln \omega\frac{1}{2\pi}(\mathbf{p}_1-\mathbf{p}_2)+\Ocal(\omega^2)\\ =&\frac{1}{2\pi}\left(\frac{\Bx-\mathbf{p}_1}{|\Bx-\mathbf{p}_1|^2}-\frac{\Bx-\mathbf{p}_2}{|\Bx-\mathbf{p}_2|^2}\right)+\omega^2\ln \omega\frac{1}{2\pi}(\mathbf{p}_1-\mathbf{p}_2)+\Ocal(\omega^2). \\
\end{split}
\eeq

\section{Estimate of $b(\Bx)$}
By definition of \eqnref{eq:defb01}, one finds that $b(\Bx)$ is the solution to
\beq\label{eq:bxeq01}
\begin{cases}
\Delta b + \omega^2 b = 0 \hspace*{3.4cm} \mbox{in} \quad \RR^2\setminus\overline{B_1\cup B_2},\smallskip\\
b= \big(\lambda_2C_1-\lambda_1C_2\big)/(C_1-C_2) \quad \mbox{on} \quad \p B_1\cup \p B_2,\smallskip\\
(b-u^i)(\Bx) \mbox{ satisfies the Sommerfeld radiation condition.}
\end{cases}
\eeq
By using layer potential techniques, one can represent $b$ in \eqnref{eq:bxeq01} by
\beq\label{eq:bxeq05}
b(\Bx)=u^i(\Bx)+\Scal_{B_1}^\omega[\varphi_1](\Bx)+\Scal_{B_2}^\omega[\varphi_2](\Bx),
\eeq
where $\varphi_1\in L^2(\p B_1)$ and $\varphi_2\in L^2(\p B_2)$ satisfy
\beq\label{eq:bxeq06}
\begin{split}
u^i(\Bx)+\Scal_{B_1}^\omega[\varphi_1](\Bx)+\Scal_{B_2}^\omega[\varphi_2](\Bx)=\tilde{C}_1, \quad \Bx\in \p B_1\cup \p B_2,
\end{split}
\eeq
with $\tilde{C}_1:=\big(\lambda_2C_1-\lambda_1C_2\big)/(C_1-C_2)$.

Note that it is proved in \cite{KLY13} that $\nabla b(\Bx)$ is uniformly bounded if $\omega=0$.
We need some further analysis on the solution $b$. First, by using \eqnref{eq:bxeq06} and the expansion \eqnref{eq:intforHan02} one has
\beq
a_\omega\int_{\p B_1\cup\p B_2}\bm\varphi+\mathbb{S}[\bm\varphi]+\mathbb{A}^\omega[\bm\varphi]=\tilde{C}_1-u^i \quad \mbox{on} \quad \p B_1\cup \p B_2,
\eeq
for $\omega$ sufficiently small. Here $\bm\varphi=(\varphi_1,\varphi_2)$ and the operator $\mathbb{S}$ is given by \eqnref{eq:defSk01}. The operator $\mathbb{A}^\omega$ is given by
\beq
\mathbb{A}^\omega:=
\left(
\begin{array}{cc}
\Acal_{B_1}^\omega|_{\p B_1}  & \Acal_{B_2}^\omega|_{\p B_1} \\
\Acal_{B_1}^\omega|_{\p B_2}  & \Acal_{B_2}^\omega|_{\p B_2}
\end{array}
\right).
\eeq
By using the definition of $a_\omega$ there holds:
\beq
\int_{\p B_1\cup\p B_2}\bm\varphi=\Ocal(\omega).
\eeq
Suppose $\varphi_j=\varphi_{j,0} + \Ocal(\omega)$, $j=1, 2$.
Direct asymptotic analysis shows that
\beq\label{eq:asyb01}
b(\Bx)= u^i(\Bx) + b_0(\Bx)+\Ocal(\omega^2),
\eeq
where $b_0=\Scal_{B_1}^0[\varphi_{1,0}](\Bx)+\Scal_{B_2}^0[\varphi_{2,0}](\Bx)$ is the harmonic function which satisfies
\beq\label{eq:b0eq01}
\begin{cases}
\Delta b_0 = 0 \hspace*{1.2cm} \mbox{in} \quad \RR^2\setminus\overline{B_1\cup B_2},\smallskip\\
b_0= \tilde{C}_1-u^i\hspace*{.4cm} \mbox{on} \quad \p B_1\cup \p B_2,\smallskip\\
b_0(\Bx)=\Ocal(|\Bx|^{-1}).
\end{cases}
\eeq

\begin{proof}[Proof of Lemma \ref{le:01}] By the asymptotic result in \eqnref{eq:asyb01}, it is sufficient to prove that $\nabla b_0$, where $b_0$ is the solution to \eqnref{eq:b0eq01}, is uniformly bounded in $\RR^2\setminus (B_1\cup B_2)$. Since $\nabla b_0$ is harmonic in $\RR^2\setminus \overline{B_1\cup B_2}$, and $\nabla b_0=\Ocal(|\Bx|^{-2})$, the function $|\nabla b_0|_{l^\infty}$ in $\RR^2\setminus (B_1\cup B_2)$ attains its maximum on the boundary $\p B_1\cup \p B_2$.
Note that $b_0$ is smooth on $\RR^2\setminus\overline{B_1\cup B_2}$. Tt is enough to show that $\nabla b_0$ is uniformly bounded with respect to $\epsilon$ on the two points $\zeta_1$ and $-\zeta_1$, where $\zeta_1=(\frac{\epsilon}{2}, 0)$.
Since $b_0(\zeta_1)=\tilde{C}_1-u^i(\zeta_1)$, one has
\beq
\begin{split}
\nabla b_0(\zeta_1)=&\nu(\zeta_1)\cdot\nabla b_0(\zeta_1) \nu(\zeta_1) + \p_T b_0(\zeta_1)T(\zeta_1)\\
=&\p_{\Bx_1} b_0(\zeta_1) (-1, 0)+\p_T u^i(\zeta_1)T(\zeta_1)\\
=& (-1, 0)\lim_{\epsilon\rightarrow 0}\frac{b_0(-\zeta_1)-b_0(\zeta_1)}{\epsilon}+(0,1)\p_T u^i(\zeta_1)\\
=& \nabla u^i(\zeta_1),
\end{split}
\eeq
where $\p_T$ stands for the tangential derivative and $T$ is the unit tangential vector. Since $\nabla u^i(\zeta_1)$ is uniformly bounded, one thus has verified that $\nabla b_0(\zeta_1)$ is uniformly bounded with respect to $\epsilon$. Similarly, one can show that $\nabla b_0(-\zeta_1)$ is also uniformly bounded.
\end{proof}
\begin{rem}
We mention that the bound on $\nabla b_0$ can be shown by following a similar argument in \cite{BLY09} and \cite{KLY13}. Here, we provide a different proof.
\end{rem}

\section{Estimate of $q_\omega(\Bx)$}
In this section, we shall estimate the singular function $q_\omega(\Bx)$. The asymptotic result \eqnref{eq:asymnqk01} shows that one only needs to estimate $\nabla q_0$. We mention that if $r_1$ and $r_2$ are constants which do not depend on $\epsilon$, the estimate of $\nabla q_0$ is well settled in \cite{KLY13}. We shall consider the case that $r_1$ and $r_2$ depend on $\epsilon$. Note that
$$
|\Bx-\mathbf{p}_1|\geq \frac{-(2r_2+\epsilon)\epsilon+\sqrt{\epsilon}\tau}{r_1+r_2+\epsilon}, \quad \mbox{and} \quad
|\Bx-\mathbf{p}_2|\geq \frac{-(2r_1+\epsilon)\epsilon+\sqrt{\epsilon}\tau}{r_1+r_2+\epsilon},
$$
hold for $\Bx\in \RR^2\setminus\overline{B_1\cup B_2}$. It follows that
\beq
\begin{split}
&\left\|\frac{\Bx-\mathbf{p}_1}{|\Bx-\mathbf{p}_1|^2}-\frac{\Bx-\mathbf{p}_2}{|\Bx-\mathbf{p}_2|^2}\right\|_{L^{\infty}(\RR^2\setminus\overline{B_1\cup B_2})}\\
\leq&\left\|\frac{\Bx-\mathbf{p}_1}{|\Bx-\mathbf{p}_1|^2}\right\|_{L^{\infty}(\RR^2\setminus\overline{B_1\cup B_2})}
+\left\|\frac{\Bx-\mathbf{p}_2}{|\Bx-\mathbf{p}_2|^2}\right\|_{L^{\infty}(\RR^2\setminus\overline{B_1\cup B_2})}\\
\leq&\Big(\frac{1}{-(2r_1+\epsilon)\epsilon/2+\sqrt{\epsilon}\tau}+\frac{1}{-(2r_2+\epsilon)\epsilon/2+\sqrt{\epsilon}\tau}\Big)(r_1+r_2+\epsilon).
\end{split}
\eeq
On the other hand, setting $\Bx=(\frac{\epsilon}{2},0)^T$, one has
\[
\begin{split}
&\left|\frac{\Bx-\mathbf{p}_1}{|\Bx-\mathbf{p}_1|^2}-\frac{\Bx-\mathbf{p}_2}{|\Bx-\mathbf{p}_2|^2}\right|_{l^\infty}\\
=&\Big(\frac{1}{-(2r_1+\epsilon)\epsilon/2+\sqrt{\epsilon}\tau}+\frac{1}{(2r_1+\epsilon)\epsilon/2+\sqrt{\epsilon}\tau}\Big)(r_1+r_2+\epsilon).
\end{split}
\]
Similarly, setting $\Bx=(-\frac{\epsilon}{2},0)^T$, one has
\[
\begin{split}
&\left|\frac{\Bx-\mathbf{p}_1}{|\Bx-\mathbf{p}_1|^2}-\frac{\Bx-\mathbf{p}_2}{|\Bx-\mathbf{p}_2|^2}\right|_{l^\infty}\\
=&\Big(\frac{1}{-(2r_2+\epsilon)\epsilon/2+\sqrt{\epsilon}\tau}+\frac{1}{(2r_2+\epsilon)\epsilon/2+\sqrt{\epsilon}\tau}\Big)(r_1+r_2+\epsilon).
\end{split}
\]
Thus there holds
\beq\label{eq:estinaq01}
\begin{split}
&\frac{r_1+r_2+\epsilon}{-(\max(r_1,r_2)+\epsilon/2)\epsilon+\sqrt{\epsilon}\tau}\leq \left\|\nabla q_0\right\|_{L^{\infty}(\RR^2\setminus\overline{B_1\cup B_2})}\leq 2\frac{r_1+r_2+\epsilon}{-(\max(r_1,r_2)+\epsilon/2)\epsilon+\sqrt{\epsilon}\tau}.
\end{split}
\eeq
\begin{proof}[Proof of Theorem \ref{th:main01}]
It is sufficient to show the estimation for $\nabla u$ in $\Omega\setminus\overline{B_1\cup B_2}$.
By using \eqnref{eq:singufun02} one has
\beq
C_1-C_2=\frac{1}{2\pi}\left(\ln\left(1-\frac{2(r_1+\epsilon/2)\sqrt{\epsilon}}{(r_1+\epsilon/2)\sqrt{\epsilon}+\tau}\right)
+\ln\left(1-\frac{2(r_2+\epsilon/2)\sqrt{\epsilon}}{(r_2+\epsilon/2)\sqrt{\epsilon}+\tau}\right)\right).
\eeq
Firstly, if $\alpha_-< 1$, then one has
\beq\label{eq:esttau01}
\tau=C_0\epsilon^{\alpha_-+\min(\alpha_+,1)/2}(1+o(1)),
\eeq
where $C_0>0$ does not depend on $\epsilon$ and is a generic constant which may vary for different choice of $\alpha_1$ and $\alpha_2$. It follows that
\[
C_1-C_2=\left\{
\begin{array}{ll}
-2\frac{r_-}{C_0}\epsilon^{1/2-\alpha_+/2}(1+o(1)), & \alpha_+<1, \\
\ln \left(1-\frac{2r_-}{r_-+C_0}\right), & \alpha_+\geq 1,
\end{array}
\right.
\]
where $\alpha_-$ is defined in \eqnref{eq:thdenr01}.
By \eqnref{eq:estinaq01}, one has
\[
\left\|\nabla q_0\right\|_{L^{\infty}(\RR^2\setminus\overline{B_1\cup B_2})}\sim
\left\{
\begin{array}{ll}
\frac{r_-}{C_0}\epsilon^{-\frac{1}{2}-\frac{\alpha_+}{2}}, & \alpha_+<1 \\
\frac{r_-}{C_0}\epsilon^{-1}. & \alpha_+\geq 1
\end{array}
\right.
\]
Finally by \eqnref{eq:estlam01}, one can derive that
\[
\begin{split}
\lambda_1-\lambda_2=&\nabla u^i(\mathbf{p}_2)\cdot (\mathbf{p}_2-\mathbf{p}_1)+\frac{1}{\pi}\omega^2\ln \omega\int_{B_1\cup B_2} \nabla u^i \cdot (\mathbf{p}_2-\mathbf{p}_1)+\Ocal(\omega^2+|\mathbf{p}_1-\mathbf{p}_2|^2)\\
=&2\frac{1}{r_-}\epsilon^{1/2+\min(\alpha_+,1)/2}(1+o(1))\left(\p_{\Bx_1} u^i(\mathbf{0})+\frac{1}{\pi}\omega^2\ln \omega\int_{B_1\cup B_2} \p_{\Bx_1} u^i +\Ocal(\omega^2)\right),
\end{split}
\]
which together with Lemma \ref{le:01} further yields that
\beq
\begin{split}
\|\nabla u\|_{L^\infty}\sim &\Big|\frac{\lambda_1-\lambda_2}{C_1-C_2}\Big| \|\nabla q_\omega\|_{L^\infty}+\Ocal(\omega^2)\\
\sim &\frac{C_0}{r_-}\epsilon^{\min(\alpha_+,1)/2-1/2}\left(\p_{\Bx_1} u^i(\mathbf{0})+\frac{1}{\pi}\omega^2|\ln \omega|\int_{B_1\cup B_2} \p_{\Bx_1} u^i +\Ocal(\omega^2)\right)+\Ocal(1).
\end{split}
\eeq
In the above estimation, $L^\infty$ stands for $L^\infty(\Omega\setminus\overline{B_1\cup B_2})$.
Similarly, if $\alpha_-\geq 1$ then one can derive that
$$
\tau=C_0\epsilon^{3/2}(1+o(1)).
$$
It then follows the estimates
$C_1-C_2=\mathcal{O}(1)$, $\left\|\nabla q_0\right\|_{L^{\infty}(\RR^2\setminus\overline{B_1\cup B_2})}=O(\epsilon^{-1})$ and $\mathbf{p}_2-\mathbf{p}_1=O(\epsilon)$ and thus
$\|\nabla u\|_{L^\infty(\Omega\setminus\overline{B_1\cup B_2})}$ is uniformly bounded. The proof is complete.
\end{proof}

\section*{Acknowledgements}
The work of Y. Deng was supported by NSF grant of China No. 11971487 and NSF grant of Hunan No. 2020JJ2038. The work of X. Fang was supported by NSF Basic Science Center Project No. 72088101, NSF grant China No. 72001077, Humanities and Social Sciences Foundation of the Ministry of Education No. 20YJC910005.
The work of H. Liu is supported by a startup fund from City University of Hong Kong and the Hong Kong RGC General Research Funds (projects 12301420, 12302919 and 12301218).

\appendix
\section{Calder\'on type identity} \label{app:cald01}
In this appendix, we prove the Calder\'on type identity \eqnref{eq:cald01}. By straightforward computations one can show that
\[
\mathbb{S}\mathbb{K}^*=
\left(
\begin{array}{cc}
\Scal_{B_1}^{0}(\Kcal_{B_1}^{0})^*|_{\p B_1}+\Scal_{B_2}^0\p_{\nu_2} \Scal_{B_1}^0|_{\p B_1}  & \Scal_{B_1}^0\p_{\nu_1} \Scal_{B_2}^0|_{\p B_1} +\Scal_{B_2}^0(\Kcal_{B_2}^0)^*|_{\p B_1} \\
\Scal_{B_1}^{0}(\Kcal_{B_1}^{0})^*|_{\p B_2}+\Scal_{B_2}^0\p_{\nu_2} \Scal_{B_1}^0|_{\p B_2}  &  \Scal_{B_1}^0\p_{\nu_1} \Scal_{B_2}^0|_{\p B_2} +\Scal_{B_2}^0(\Kcal_{B_2}^0)^*|_{\p B_2}
\end{array}
\right),
\]
and
\[
\mathbb{S}\mathbb{K}^*=
\left(
\begin{array}{cc}
\Kcal_{B_1}^{0}\Scal_{B_1}^{0}+\Dcal_{B_2}^0\Scal_{B_1}^0|_{\p B_1}   & \Kcal_{B_1}^{0}\Scal_{B_2}^{0}+\Dcal_{B_2}^0\Scal_{B_2}^0|_{\p B_1}  \\
\Dcal_{B_1}^{0}\Scal_{B_1}^{0}|_{\p B_2}+\Kcal_{B_2}^0\Scal_{B_1}^0   & \Dcal_{B_1}^{0}\Scal_{B_2}^{0}|_{\p B_2}+\Kcal_{B_2}^0\Scal_{B_2}^0
\end{array}
\right).
\]
Note that there holds the Caldr\'on identity:
$$\Scal_{B_1}^{0}(\Kcal_{B_1}^{0})^*|_{\p B_1}=\Kcal_{B_1}^{0}\Scal_{B_1}^{0}, \quad \Scal_{B_2}^{0}(\Kcal_{B_2}^{0})^*|_{\p B_2}=\Kcal_{B_2}^{0}\Scal_{B_2}^{0}.$$
We first show the identity
$$\Scal_{B_2}^0\p_{\nu_2} \Scal_{B_1}^0|_{\p B_1}=\Dcal_{B_2}^0\Scal_{B_1}^0|_{\p B_1}.$$
In fact, letting $\varphi\in L^2(\p B_1)$ and by integration by parts, there holds
\[
\begin{split}
\Scal_{B_2}^0\p_{\nu_2} \Scal_{B_1}^0[\varphi](\Bx)&=\int_{\p B_2}\Gamma_0(\Bx-\By)\int_{\p B_1}\frac{\p \Gamma_0(\By-\Bz)}{\p_{\nu_\By}}\varphi(\Bz)d s_\Bz d s_\By\\
&=\int_{\p B_1}\int_{\p B_2}\Gamma_0(\Bx-\By)\frac{\p \Gamma_0(\By-\Bz)}{\p_{\nu_\By}} d s_\By\varphi(\Bz)d s_\Bz \\
&=\int_{\p B_1}\int_{\p B_2}\frac{\p \Gamma_0(\Bx-\By)}{\p_{\nu_\By}}\Gamma_0(\By-\Bz) d s_\By\varphi(\Bz)d s_\Bz \\
&=\int_{\p B_2}\frac{\p \Gamma_0(\Bx-\By)}{\p_{\nu_\By}}\int_{\p B_1}\Gamma_0(\By-\Bz)\varphi(\Bz)d s_\Bz d s_\By=\Dcal_{B_2}^0\Scal_{B_1}^0[\varphi](\Bx),
\end{split}
\]
for any $\Bx\in \p B_1$.
Next, by integration by parts again one has
\[
\begin{split}
&\Scal_{B_1}^0\p_{\nu_1} \Scal_{B_2}^0[\varphi](\Bx) +\Scal_{B_2}^0(\Kcal_{B_2}^0)^*[\varphi](\Bx)\\
=&\int_{\p B_1}\Gamma_0(\Bx-\By)\int_{\p B_2}\frac{\p \Gamma_0(\By-\Bz)}{\p_{\nu_\By}}\varphi(\Bz)d s_\Bz d s_\By\\
&+\int_{\p B_2}\Gamma_0(\Bx-\By)\int_{\p B_2}\frac{\p \Gamma_0(\By-\Bz)}{\p_{\nu_\By}}\varphi(\Bz)d s_\Bz \Big|_- d s_\By+
\frac{1}{2}\int_{\p B_2}\Gamma_0(\Bx-\By)\varphi(\By)\Big|_- d s_\By\\
=&\int_{\p B_2}\int_{\p B_1}\frac{\p \Gamma_0(\Bx-\By)}{\p_{\nu_\By}}\Big|_- \Gamma_0(\By-\Bz)d s_\By\varphi(\Bz)d s_\Bz-
\frac{1}{2}\int_{\p B_2}\Gamma_0(\Bx-\By)\varphi(\By)\Big|_- d s_\By\\
&+\int_{\p B_2}\int_{\p B_2}\frac{\p \Gamma_0(\Bx-\By)}{\p_{\nu_\By}} \Gamma_0(\By-\Bz)d s_\By\varphi(\Bz)d s_\Bz\\
=&\Kcal_{B_1}^{0}\Scal_{B_2}^{0}[\varphi](\Bx)+\Dcal_{B_2}^0\Scal_{B_2}^0[\varphi](\Bx),
\end{split}
\]
for any $\Bx\in \p B_1$. Similarly, one can show that
$$\Scal_{B_1}^0\p_{\nu_1} \Scal_{B_2}^0|_{\p B_2}=\Dcal_{B_1}^{0}\Scal_{B_2}^{0}|_{\p B_2},$$
and
$$
\Scal_{B_1}^{0}(\Kcal_{B_1}^{0})^*|_{\p B_2}+\Scal_{B_2}^0\p_{\nu_2} \Scal_{B_1}^0|_{\p B_2}=\Dcal_{B_1}^{0}\Scal_{B_1}^{0}|_{\p B_2}+\Kcal_{B_2}^0\Scal_{B_1}^0.
$$

\end{document}